\newcommand{\N}{\mathbb{N}}
\DeclareMathAlphabet{\mathpzc}{OT1}{pzc}{m}{it}
\newcommand{\PI}[2]{\ensuremath{\boldsymbol\Pi^{#1}_{#2}}}
\newcommand{\SI}[2]{\ensuremath{\boldsymbol\Sigma^{#1}_{#2}}}
\newcommand{\DE}[2]{\ensuremath{\boldsymbol\Delta^{#1}_{#2}}}
\newcommand{\RCA}{{\ensuremath{\mathsf{RCA}_0}}}
\newcommand{\WKL}{{\ensuremath{\mathsf{WKL}_0}}}
\newcommand{\ACA}{{\ensuremath{\mathsf{ACA}_0}}}
\newcommand{\RT}{\ensuremath{\mathsf{RT}^1_{<\infty}}}
\newcommand{\RTt}{\ensuremath{\mathsf{RT}^2_2}}
\newcommand{\FUF}{\ensuremath{\mathsf{FUF}}}
\newcommand{\BS}{\ensuremath{\mathsf{B}\SI02}}
\newcommand{\PART}{\ensuremath{\mathsf{PART}}}
\theoremstyle{plain}
\newtheorem{theorem}{Theorem}[section]
\newtheorem{lemma}[theorem]{Lemma}
\theoremstyle{definition}
\newtheorem{definition}[theorem]{Definition}
\theoremstyle{remark}
\title[Linear extensions of partial orders and Reverse Mathematics]{Linear extensions of partial orders\\ and Reverse Mathematics}
\author{Emanuele Frittaion}
    \address{Dipartimento di Matematica e Informatica,
    Universit\`{a} di Udine,
    33100 Udine,
    Italy}
\email{emanuele.frittaion@uniud.it}
\author{Alberto Marcone}
    \address{Dipartimento di Matematica e Informatica,
    Universit\`{a} di Udine,
    33100 Udine,
    Italy}
\email{alberto.marcone@uniud.it}
\date{Saved: April 11, 2012.}
\begin{document}

\subjclass[2010]{Primary: 03B30; Secondary: 06A07}

\maketitle

\begin{abstract}
We introduce the notion of $\tau$-like partial order, where $\tau$ is
one of the linear order types $\omega$, $\omega^*$, $\omega+\omega^*$,
and $\zeta$. For example, being $\omega$-like means that every element
has finitely many predecessors, while being $\zeta$-like means that
every interval is finite. We consider statements of the form ``any
$\tau$-like partial order has a $\tau$-like linear extension'' and
``any $\tau$-like partial order is embeddable into $\tau$'' (when
$\tau$ is $\zeta$ this result appears to be new). Working in the
framework of reverse mathematics, we show that these statements are
equivalent either to \BS\ or to \ACA\ over the usual base system \RCA.
\end{abstract}


\section{Introduction}

Szpilrajn's Theorem (\cite{Szp30}) states that any partial order has a
linear extension. This theorem raises many natural questions, where in
general we search for properties of the partial order which are
preserved by some or all its linear extensions. For example it is
well-known that a partial order is a well partial order if and only if
all its linear extensions are well-orders.

A question which has been widely considered is the following: given a
linear order type $\tau$, is it the case that any partial order that
does not embed $\tau$ can be extended to a linear order that also does
not embed $\tau$? If the answer is affirmative, $\tau$ is said to be
extendible, while $\tau$ is weakly extendible if the same holds for any
countable partial order. For instance, the order types of the natural
numbers, of the integers, and of the rationals are extendible. Bonnet
(\cite{Bon69}) and Jullien (\cite{Jul}) characterized all countable
extendible and weakly extendible linear order types respectively.

We are interested in a similar question: given a linear order type
$\tau$ and a property characterizing $\tau$ and its suborders, is it
true that any partial order which satisfies that property has a linear
extension which also satisfies the same property? In our terminology:
does any $\tau$-like partial order have a $\tau$-like linear extension?
Here we address this question for the linear order types $\omega$,
$\omega^*$ (the inverse of $\omega$), $\omega+\omega^*$ and $\zeta$
(the order of integers). So, from now on, $\tau$ will denote one of
these.

\begin{definition}
Let $(P,\leq_P)$ be a countable partial order. We say that $P$ is
\begin{itemize}
  \item \emph{$\omega$-like} if every element of $P$ has finitely
      many predecessors;
  \item \emph{$\omega^*$-like} if every element of $P$ has finitely
      many successors;
  \item \emph{$\omega+\omega^*$-like} if every element of $P$ has
      finitely many predecessors or finitely many successors;
  \item \emph{$\zeta$-like} if for every pair of elements $x,y\in
      P$ there exist only finitely many elements $z$ with $x<_P
      z<_P y$.
\end{itemize}
\end{definition}

The previous definition resembles Definition 2.3 of Hirschfeldt and
Shore (\cite{HirSho07}), where linear orders of type $\omega$,
$\omega^*$ and $\omega+\omega^*$ are introduced. The main difference is
that the order properties defined by Hirschfeldt and Shore are meant to
uniquely determine a linear order type up to isomorphism, whereas our
definitions apply to partial orders in general and do not determine an
order type. Notice also that, for instance, an $\omega$-like partial
order is also $\omega+\omega^*$-like and $\zeta$-like.

We introduce the following terminology:

\begin{definition}
We say that $\tau$ is \emph{linearizable} if every $\tau$-like partial
order  has a linear extension which is also $\tau$-like.
\end{definition}

With this definition in hand, we are ready to formulate the results we
want to study:

\begin{theorem}\label{main}
The following hold:
\begin{enumerate}
 \item $\omega$ is linearizable;
 \item $\omega^*$ is linearizable;
 \item $\omega+\omega^*$ is linearizable;
 \item $\zeta$ is linearizable.
\end{enumerate}
\end{theorem}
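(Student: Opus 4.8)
The plan is to handle the four items in order of increasing difficulty, reducing the later ones to the earlier ones wherever possible. For item (1), I would build the $\omega$-like extension by a topological sort exploiting the finiteness of initial segments. Enumerate $P=\{p_0,p_1,\dots\}$ and, writing $D(x)=\{y:y\le_P x\}$ for the (finite, by hypothesis) downward closure of $x$, define an increasing chain of finite lists $L_0\subseteq L_1\subseteq\cdots$ where $L_n$ is obtained from $L_{n-1}$ by appending the elements of $D(p_n)\setminus L_{n-1}$ in some order consistent with $<_P$. An easy induction shows each $L_n$ is downward closed, so no newly appended element lies $<_P$-below an element already listed; hence the union is a linear extension, and being presented as an $\omega$-sequence it has order type $\le\omega$, so it is $\omega$-like. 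Item (2) is the exact order dual, using the finite upward closures to build a list of type $\le\omega^*$.

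For item (3) I would reduce to (1) and (2). Split $P$ into $A=\{x:x\text{ has finitely many }<_P\text{-predecessors}\}$ and $B=P\setminus A$; since $P$ is $\omega+\omega^*$-like, every element of $B$ has finitely many successors. One checks that $A$ is downward closed and $<_P$ is $\omega$-like on it, that $B$ is upward closed and $<_P$ is $\omega^*$-like on it, and that no element of $B$ lies $<_P$-below an element of $A$ (a predecessor of an $A$-element is again an $A$-element). Applying (1) to $A$ and (2) to $B$ and stacking the whole $A$-block below the whole $B$-block gives a linear extension of type $\le\omega+\omega^*$ in which every element of the lower block keeps finitely many predecessors and every element of the upper block finitely many successors, so the extension is $\omega+\omega^*$-like.

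Item (4) is the real difficulty. First I would reformulate the goal: fixing any point of a $\zeta$-like \emph{linear} order and counting the finitely many elements in each interval to it yields an order-preserving injection into $\Z$ with convex image, so such orders have type among $\omega$, $\omega^*$, $\zeta$ and the finite ones, and producing a $\zeta$-like linear extension of $P$ is the same as producing an order-preserving injection $f\colon P\to\Z$. I would then reduce to the connected components of the comparability graph of $P$: distinct components are $<_P$-incomparable, so from embeddings $f_n\colon C_n\to\Z$ one assembles a global embedding by spreading the ranges apart, e.g. $x\mapsto 2^n(2f_n(x)+1)$ for $x\in C_n$, whose values have pairwise distinct $2$-adic valuations and hence never collide. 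The link with the earlier items is that, inside a single component, for any fixed $c$ the up-set $\{x:c\le_P x\}$ is $\omega$-like and the down-set $\{x:x\le_P c\}$ is $\omega^*$-like, since the predecessors, resp. successors, of a point within such a cone all lie in one finite interval.

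The hard part, which I expect to be the main obstacle and presumably the source of the ``appears to be new'' remark, is embedding a single connected $\zeta$-like component into $\Z$. Unlike the embedding into $\Q$ available for every countable partial order, here there is no room to subdivide indefinitely, so the finiteness of intervals must be used globally rather than pair by pair: for any two points only the finitely many elements of their $<_P$-interval are forced between them, but pairwise-incomparable elements enjoy placement freedom, and the danger is that infinitely many of them accumulate between two fixed points. I would attack this by a staged insertion along an enumeration of the component, each time inserting the new element immediately next to the $<_L$-boundary of its already-placed forced-below and forced-above neighbours (automatically separated, since $u<_P x<_P v$ forces $u<_P v$), and then proving by an accumulation argument—driven by connectivity together with the finiteness of every interval—that no pair ends up with infinitely many elements strictly between them. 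Finally I would record the reverse-mathematics bookkeeping: items (1) and (2) only need the finite closures $D(p_n)$ and their duals, whereas (3) and (4) require deciding for each element whether it has finitely many predecessors or successors and computing the comparability components, so that \ACA\ comfortably carries out all four constructions, with the sharper calibration against \BS\ left to the subsequent analysis.
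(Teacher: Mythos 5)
Your treatments of items (1)--(3) are correct and essentially follow the paper's route: your topological sort by finite downward closures is a rephrasing of the block construction the paper uses for $\omega$ (and dually $\omega^*$), and your splitting of an $\omega+\omega^*$-like order into the downward closed set of elements with finitely many predecessors and its upward closed complement, followed by stacking the two extensions, is exactly the paper's argument for (3). Your reformulation of (4) as the existence of an order-preserving injection into $\Z$, and the reduction to connected components of the comparability graph, are also sound.

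The gap is in the remaining step of (4), which is the heart of the theorem (the only part the paper claims as new), and it is genuine: the staged insertion you propose is not merely unproved, it fails. Take $P$ with elements $a,b,u,x_0,x_1,\dots$ where $a<_P b$, $a<_P u<_P x_0<_P x_1<_P\cdots$, and $b$ is incomparable with $u$ and with every $x_n$. This $P$ is connected and $\zeta$-like (every interval is finite). Processing the enumeration $a,b,u,x_0,x_1,\dots$ with your rule, $b$ goes immediately above $a$, then $u$ goes immediately above its only placed predecessor $a$, hence below $b$, and inductively each $x_n$ goes immediately above $x_{n-1}$, hence still below $b$; the result is $a<_L u<_L x_0<_L x_1<_L\cdots<_L b$, so infinitely many elements accumulate in $[a,b]_L$ and the extension is not $\zeta$-like. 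The obvious variant of the rule (send an element with no placed successors to the very top rather than next to its predecessors) is defeated by the dual configuration, and the symmetric ``next to the boundary'' reading is also defeated by a zigzag fence. The moral is that the decision cannot be made locally: an element whose constraints are one-sided must sometimes be thrown \emph{outside} the convex hull of everything placed so far, and which side it goes to must be coordinated globally. This is precisely the paper's key device: it fixes anchor points $z_0,z_1,\dots$ so that the finite blocks $P_n$ of elements newly covered by the intervals $[z_i,z_n]_P$ partition $P$, and it places each block $P_n$ \emph{entirely} above or \emph{entirely} below all earlier blocks, the direction (``right'' or ``left'') determined by whether $z_n$ lies above some earlier $z_i$, or below one. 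Since nothing is ever inserted strictly between previously placed blocks, the set of elements between $z_i$ and $z_j$ in the final order is contained in $\bigcup_{k\leq\max(i,j)}P_k$, hence finite. Your observation about up-sets and down-sets of a fixed point being $\omega$-like and $\omega^*$-like is true, but it is never brought to bear on this difficulty; as written, item (4) remains unproven.
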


A proof of the linearizability of $\omega$ can be found in Fra\"iss\'{e}'s
monograph (\cite[\S 2.15]{Fra00}), where the result is attributed to
Milner and Pouzet. $(2)$ is similar to $(1)$ and the proof of $(3)$
easily follows from $(1)$ and $(2)$. The linearizability of $\zeta$ is
apparently a new result (for a proof see Lemma \ref{lemma 2}
below).\medskip

In this paper we study the statements contained in Theorem \ref{main}
from the standpoint of reverse mathematics (the standard reference is
\cite{Sim09}), whose goal is to characterize the axiomatic assumptions
needed to prove mathematical theorems. We assume the reader is familiar
with systems such as \RCA\ and \ACA. The reverse mathematics of weak
extendibility is studied in \cite{DHLS} and \cite{Mon06}. The existence
of maximal linear extensions of well partial orders is studied from the
reverse mathematics viewpoint in \cite{MarSho11}.\medskip

Our main result is that the linearizability of $\tau$ is equivalent
over \RCA\ to the  $\SI02$ bounding principle \BS\ when $\tau \in
\{\omega, \omega^*, \zeta\}$, and to \ACA\ when $\tau=\omega+\omega^*$.
For more details on \BS, including an apparently new equivalent (simply
asserting that a finite union of finite sets is finite), see
\S\ref{Section FUF} below.

The linearizability of $\omega$ appears to be the first example of a
genuine mathematical theorem (actually appearing in the literature for
its own interest, and not for its metamathematical properties) that
turns out to be equivalent to \BS.\medskip

To round out our reverse mathematics analysis, we also consider a
notion closely related to linearizability:

\begin{definition}
We say that $\tau$ is \emph{embeddable} if every $\tau$-like partial
order $P$ embeds into $\tau$, that is there exists an order preserving
map from $P$ to $\tau$.\footnote{To formalize this definition in \RCA,
we need to fix a canonical representative of the order type $\tau$,
which we do in Definition 1.5.}
\end{definition}

It is rather obvious that $\tau$ is linearizable if and only if $\tau$
is embeddable. Let us notice that \RCA\ easily proves that embeddable
implies linearizable. Not surprisingly, the converse is not true. In
fact, we show that embeddability is strictly stronger when $\tau \in
\{\omega, \omega^*, \zeta\}$, and indeed equivalent to \ACA. The only
exception is given by $\omega+\omega^*$, for which both properties are
equivalent to \ACA.\medskip

We use the following definitions in \RCA.

\begin{definition}[\RCA]
Let $\leq$ denote the usual ordering of natural numbers. The linear
order $\omega$ is $(\N,{\leq})$, while $\omega^*$ is $(\N,{\geq})$.

Let $\{P_i\colon i\in Q\}$  be a family of partial orders indexed by a
partial order $Q$. The \emph{lexicographic sum} of the $P_i$ along $Q$,
denoted by  $\sum_{i\in Q}P_i$, is the partial order on the set
$\{(i,x)\colon i\in Q \land x\in P_i\}$  defined by
\[
(i,x) \leq (j,y) \iff i <_Q j \lor (i=j \land x \leq_{P_i} y).
\]

The \emph{sum} $\sum_{i<n}P_i$ can be regarded as the lexicographic sum
along the $n$-element chain. In particular $P_0+P_1$ is the
lexicographic sum along the $2$-element chain (and we have thus defined
$\omega + \omega^*$ and $\zeta = \omega^*+\omega$).

Similarly, the \emph{disjoint sum} $\bigoplus_{i<n} P_i$ is the
lexicographic sum along the $n$-element antichain.
\end{definition}


\section{$\SI02$ bounding and finite union of finite sets}\label{Section FUF}

Let us recall that \BS\ (standing for \SI02 bounding, and also known as
\SI02 collection) is the scheme:
\[
\tag{\BS} (\forall i<n) (\exists m) \varphi(i,n,m) \implies
(\exists k) (\forall i<n) (\exists m<k) \varphi(i,n,m),
\]
where $\varphi$ is any $\SI02$ formula.

It is well-known that \RCA\ does not prove \BS, which is strictly
weaker than \SI02 induction. Neither of \WKL\ and \BS\ implies the
other and Hirst (\cite{Hirst87}, for a widely available proof see
\cite[Theorem 2.11]{ChoJocSla01}) showed that \RTt\ (Ramsey theorem for
pairs and two colors) implies \BS.

A few combinatorial principles are known to be equivalent to \BS\ over
\RCA.

Hirst (\cite{Hirst87}, for a widely available proof see \cite[Theorem
2.10]{ChoJocSla01}) showed that, over \RCA, \BS\ is equivalent to the
infinite pigeonhole principle, i.e. the statement
\[
\tag{\RT} (\forall n) (\forall f:\N \to n) (\exists A \subseteq \N \text{ infinite}) (\exists c<n) (\forall m\in A) (f(m)=c).
\]
(The notation arises from viewing the infinite pigeonhole principle as
Ramsey theorem for singletons and an arbitrary finite number of
colors.)

Chong, Lempp and Yang (\cite{ChoLemYan10}) showed that a combinatorial
principle \PART\ about infinite $\omega+\omega^*$ linear orders,
introduced by Hirschfeldt and Shore (\cite[\S4]{HirSho07}), is also
equivalent to \BS. More recently, Hirst (\cite{Hirst12}) also proved
that \BS\ is equivalent to a statement apparently similar to Hindman's
theorem, but much weaker from the reverse mathematics viewpoint.

We consider the statement that a finite union of finite sets is finite:
\[
\tag{\FUF} (\forall i<n) (X_i \text{ is finite}) \implies
\bigcup_{i<n} X_i \text{ is finite}.
\]
Here ``$X$ is finite'' means $(\exists m)(\forall x \in X) (x<m)$. This
statement can be viewed as a second-order version of $\Pi_0$
regularity, which in the context of first-order arithmetic is known to
be equivalent to $\Sigma_2$ bounding (see e.g.\ \cite[Theorem
2.23.4]{HajPud}).

\begin{lemma}\label{lemma 0}
Over \RCA, \BS\ is equivalent to \FUF.
\end{lemma}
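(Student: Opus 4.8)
The plan is to prove the equivalence by establishing both implications over $\RCA$.

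For the direction $\BS \implies \FUF$: Suppose $X_0, \dots, X_{n-1}$ are each finite, meaning for each $i < n$ there exists $m$ with $(\forall x \in X_i)(x < m)$. This is a $\SI02$ statement in the parameters $i$ and $n$ (the matrix $(\forall x \in X_i)(x<m)$ is $\Pi^0_1$, so bounding it existentially gives $\SI02$). Applying $\BS$ to the formula $\varphi(i,n,m) \equiv (\forall x \in X_i)(x<m)$ yields a bound $k$ such that for each $i<n$ there is some $m<k$ with $(\forall x \in X_i)(x<m)$. Then every element of every $X_i$ is below $k$, so $(\forall x \in \bigcup_{i<n} X_i)(x < k)$, witnessing that the union is finite. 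This direction is essentially immediate once one observes that finiteness is the right $\SI02$ predicate to feed into $\BS$.

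For the converse $\FUF \implies \BS$: I would argue via the contrapositive, showing that the failure of $\BS$ yields a counterexample to $\FUF$. Assume $(\forall i<n)(\exists m)\varphi(i,n,m)$ holds for some $\SI02$ formula $\varphi$ but no uniform bound $k$ exists. Writing $\varphi(i,n,m)$ as $(\exists s)\psi(i,n,m,s)$ with $\psi$ being $\Pi^0_1$, for each $i<n$ I can use $\Delta^0_1$ (equivalently, primitive recursive) comprehension to define the set $X_i = \{m : (\forall m' < m)\,\neg\varphi(i,n,m')\}$ — more precisely, I would capture the least witness $m_i$ for each $i$ and form finite sets whose boundedness encodes the existence of the uniform bound. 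Since each $i<n$ does have a witness, the least-witness function is total on $\{0,\dots,n-1\}$, so each relevant $X_i$ is finite; applying $\FUF$ bounds them all uniformly, contradicting the assumed failure of $\BS$.

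The main obstacle will be the converse direction, specifically forming the sets $X_i$ as legitimate objects in $\RCA$: since $\varphi$ is $\SI02$ rather than $\Delta^0_1$, the naive definition of $X_i$ need not be available by $\Delta^0_1$ comprehension. The standard device is to pass through the equivalence of $\BS$ with the infinite pigeonhole principle $\RT$ stated earlier, or to work directly with a single $\SI02$ formula and build the $X_i$ using the $\Pi^0_1$ matrix $\psi$ together with a search that is provably bounded precisely when $\FUF$ applies. I would therefore either reduce to $\RT$ — defining a coloring $f:\N \to n$ from a putative unbounded sequence of witnesses and using $\FUF$ to contradict the existence of an infinite monochromatic set — or carefully verify that the sets arising from least-witness searches are finite and available in $\RCA$, so that $\FUF$ delivers the required bound. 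The coloring reduction seems cleaner and I expect that to be the route I would take.
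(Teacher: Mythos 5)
Your first direction is correct and is exactly the paper's proof: finiteness of $X_i$ is expressed by the \PI01\ (hence \SI02) formula $(\forall x\in X_i)(x<m)$, and a single application of \BS\ to this formula yields the uniform bound $k$, so the union is finite.

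The converse is where your proposal has a genuine gap. You correctly diagnose the obstacle to the direct argument (the least-witness sets attached to a \SI02\ formula are not available by $\Delta^0_1$ comprehension in \RCA), and you correctly identify the remedy, namely passing through Hirst's equivalence of \BS\ with \RT\ --- this is also what the paper does. But your description of how to execute that reduction is backwards. Since Hirst's theorem already supplies \RT\ $\implies$ \BS\ over \RCA, the only thing left to prove is \FUF\ $\implies$ \RT, and that statement concerns an \emph{arbitrary given} coloring: let $f\colon\N\to n$ be any coloring, define $X_i=\{m\colon f(m)=i\}$ for $i<n$ (these exist by $\Delta^0_1$ comprehension, since $f$ is a function), note that $\bigcup_{i<n}X_i=\N$ is infinite, and conclude from (the contrapositive of) \FUF\ that some $X_i$ is infinite; that $X_i$ is homogeneous. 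At no point does one touch witnesses of the \SI02\ formula. Your plan instead starts from a putative failure of \BS\ and tries to define a coloring ``from a putative unbounded sequence of witnesses,'' with \FUF\ then ``contradicting the existence of an infinite monochromatic set.'' This re-entangles you in exactly the \SI02-witness-extraction problem you had just flagged: building a coloring that encodes the witness structure of a \SI02\ formula is the nontrivial content of Hirst's direction \RT\ $\implies$ \BS, not something \RCA\ does naively, whereas citing that direction as a black box makes any such construction unnecessary. Moreover, the role of \FUF\ is inverted: it is used to \emph{produce} an infinite homogeneous set, not to refute one. The fix is small --- state and prove \FUF\ $\implies$ \RT\ as above and cite Hirst --- and with it your argument coincides with the paper's.
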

%
%
\begin{proof}
First notice that \FUF\ follows immediately from the instance of \BS\
relative to the \PI01, and hence \SI02, formula $(\forall x \in X_i)
(x<m)$.

For the other direction we use Hirst's result recalled above: it
suffices to prove that \FUF\ implies \RT. Let $f\colon\N\to n$ be
given. Define for each $i<n$ the set $X_i=\{m\colon f(m)=i\}$. Clearly
$\bigcup_{i<n}X_i=\N$ is infinite. By \FUF, there exists $i<n$ such
that $X_i$ is infinite. Now $X_i$ is an infinite homogeneous set for
$f$.
\end{proof}


\section{Linearizable types}

Notice that Szpilrajn's Theorem is easily seen to be computably true
(see \cite[Observation 6.1]{Dow98}) and provable in \RCA. We use this
fact several times without further notice.

We start by proving that \BS\ suffices to establish the linearizability
of $\omega$, $\omega^*$ and $\zeta$.

\begin{lemma}\label{lemma 1}
\RCA\ proves that \BS\ implies the linearizability of $\omega$ and
$\omega^*$.
\end{lemma}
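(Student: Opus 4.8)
The plan is to prove that $\BS$ implies $\omega$ is linearizable (the $\omega^*$ case being symmetric, obtained by reversing the order). So let $(P,\leq_P)$ be an $\omega$-like countable partial order, meaning every element has only finitely many $\leq_P$-predecessors. I want to produce a linear extension $\leq_L$ of $\leq_P$ that is still $\omega$-like, i.e. in which every element still has finitely many predecessors. The natural strategy is to build the linear order by assigning each element $x \in P$ a natural-number ``rank'' reflecting how far it sits from the bottom, and then break ties using the underlying indexing of $P$ to get a total order. Concretely, I would try to define $r(x)$ to be the number of $\leq_P$-predecessors of $x$ (or the length of the longest descending chain below $x$), and set $x <_L y$ iff $r(x) < r(y)$, or $r(x) = r(y)$ and $x$ precedes $y$ in the fixed enumeration of $P$. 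Since $r(x)$ is finite for each $x$ by $\omega$-likeness, this is well-defined, and one checks it extends $\leq_P$.

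\medskip

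The key steps, in order, would be: (i) using $\omega$-likeness to see that for each $x$ the predecessor set $\mathrm{Pred}(x) = \{z : z <_P x\}$ is finite, so a rank function $r$ can be defined in $\RCA$ as the cardinality of this finite set; (ii) verifying that $r$ is strictly monotone, i.e. $x <_P y$ implies $r(x) < r(y)$, which holds because $\mathrm{Pred}(x) \cup \{x\} \subseteq \mathrm{Pred}(y)$; (iii) defining $\leq_L$ as the lexicographic combination of $r$ and the index enumeration and checking it is a linear extension; and finally (iv) the crucial step of showing $\leq_L$ is $\omega$-like, i.e. each element has finitely many $\leq_L$-predecessors. Step (iv) is where $\BS$ must enter: the set of $\leq_L$-predecessors of $y$ is contained in the union, over all $j \leq r(y)$, of the sets $R_j = \{x : r(x) = j\}$, so it suffices to show each $R_j$ is finite and then invoke $\FUF$ (equivalently $\BS$, by Lemma \ref{lemma 0}) to conclude the finite union is finite.

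\medskip

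I expect the main obstacle to be exactly step (iv), and specifically establishing that each rank level $R_j = \{x : r(x) = j\}$ is finite. This is where the hypothesis interacts subtly with the logical strength: being $\omega$-like tells us each downward cone is finite, but it does not immediately hand us a uniform bound on a given level $R_j$, nor is it obvious in $\RCA$ that $R_j$ is finite at all rather than merely of bounded rank. The delicate point is that finiteness of each $R_j$ may itself require an inductive or bounding argument, and then assembling finiteness of $\bigcup_{j \leq r(y)} R_j$ from the finiteness of the finitely many pieces is precisely the nontrivial content of $\FUF$ — this is presumably why the statement is no longer provable in plain $\RCA$ and genuinely needs $\BS$. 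I would therefore focus most care on formulating the rank levels so that their individual finiteness is provable in $\RCA$ (perhaps by directly exhibiting, for each $j$, a bound derived from the finite predecessor sets), leaving the passage to the finite union as the single clean application of $\FUF$.
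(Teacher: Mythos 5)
Your approach breaks at exactly the point you flagged as the main obstacle, and the failure is not a matter of delicate formalization: the rank levels $R_j=\{x\colon r(x)=j\}$ are simply not finite in general, and no reformulation of the rank can make them finite. Consider the $\omega$-like partial order consisting of an infinite antichain $\{a_n\colon n\in\N\}$ together with one further element $b$ with $a_0<_P b$ and $b$ incomparable with every other $a_n$. Every $a_n$ has rank $0$ and $b$ has rank $1$, so $R_0$ is infinite; worse, in your lexicographic order every $a_n$ is an $L$-predecessor of $b$, so the linear extension you build is not $\omega$-like. This cannot be repaired by \FUF\ or by any bounding principle: any extension that places all elements of smaller rank below all elements of larger rank is doomed as soon as some rank level is infinite, which already happens in this trivial example. (The same counterexample defeats your alternative rank by longest descending chain.) The underlying problem is that rank is a global invariant, and sorting by a global invariant stacks infinitely many pairwise incomparable elements below a single element.

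The paper's proof avoids comparing elements by rank altogether. It recursively picks a sequence $z_n$, where $z_n$ is the $\N$-least element not $\leq_P$-below any previously chosen $z_i$, and groups $P$ into blocks $P_n=\{x\in P\colon x\leq_P z_n\land(\forall i<n)(x\nleq_P z_i)\}$. Each block sits inside the predecessor cone of $z_n$, hence is finite outright by $\omega$-likeness---no bounding is needed for that step. The blocks partition $P$ compatibly with $\leq_P$, so any linear extension $L$ of the lexicographic sum $\sum_{n\in\omega}P_n$ extends $P$; and the $L$-predecessors of a point of $P_n$ lie in $\bigcup_{i\leq n}P_i$, a finite union of finite sets, which is where \FUF\ does its work (\FUF\ together with $\Sigma^0_1$ induction is also used to show the recursion defining the $z_n$ never gets stuck, i.e.\ that the finitely many finite cones $\{x\colon x\leq_P z_i\}$, $i<n$, cannot exhaust the infinite order $P$). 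The structural difference from your plan is that incomparable elements end up interleaved along the sequence of blocks rather than layered by rank, so no element ever acquires infinitely many predecessors in the extension.
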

\begin{proof}
We argue in \RCA\ and, by Lemma \ref{lemma 0}, we may assume \FUF. Let
us consider first $\omega$. So let $P$ be an $\omega$-like partial
order which, to avoid trivialities, we may assume to be infinite. We
recursively define a sequence $z_n\in P$ by letting $z_n$ be the least
(w.r.t.\ the usual ordering of $\N$) $x\in P$ such that $(\forall
i<n)(x\nleq_P z_i)$.

We show by \SI01 induction that $z_n$ is defined for all $n\in\N$.
Suppose that $z_i$ is defined for all $i<n$. We want to prove $(\exists
x\in P)(\forall i<n)(x\nleq_P z_i)$. Define $X_i=\{x\in P\colon x\leq
_P z_i\}$ for $i<n$. Since $P$ is $\omega$-like, each $X_i$ is finite.
By \FUF, $\bigcup_{i<n} X_i$ is also finite. The claim follows from the
fact that $P$ is infinite.

Now define for each $n\in\N$ the finite set
\[ P_n=\{x\in P\colon x\leq_P z_n\land(\forall i<n)(x\nleq_P z_i)\}.\]
It is not hard to see that the $P_n$'s form a partition of $P$, and
that if $x\leq_P y$ with $x\in P_i$ and $y\in P_j$, then $i\leq j$.
Then let $L$ be a linear extension of the lexicographic sum
$\sum_{n\in\omega} P_n$. $L$ is clearly a linear order and extends $P$
by the remark above. To prove that $L$ is $\omega$-like, note that the
set of $L$-predecessors of an element of $P_n$ is included in
$\bigcup_{i \leq n} P_i$, which is finite, by \FUF\ again.

For $\omega^*$, repeat the same construction using $\geq_P$ in place of
$\leq_P$, and let $L$ be a linear extension of
$\sum_{n\in\omega^*}P_n$.
\end{proof}

\begin{lemma}\label{lemma 2}
\RCA\ proves that \BS\ implies  the linearizability of $\zeta$.
\end{lemma}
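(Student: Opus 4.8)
The plan is to reduce the construction of a $\zeta$-like linear extension to the construction of a suitable partition of $P$ into finite blocks, and then to invoke \FUF\ exactly as in Lemma \ref{lemma 1}. Arguing in \RCA\ and assuming \FUF\ (Lemma \ref{lemma 0}), I would first isolate the following sufficient condition: suppose we can partition an infinite $\zeta$-like $P$ into a family of finite sets $(P_n)_{n\in\Z}$ (some possibly empty) which is \emph{order consistent}, in the sense that $x\leq_P y$ with $x\in P_i$ and $y\in P_j$ implies $i\leq j$. Linearly extending each finite $P_n$ and taking $L$ to be the lexicographic sum $\sum_{n\in\Z}P_n$, one checks that $L$ is a linear extension of $P$. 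Moreover $L$ is $\zeta$-like: if $x\in P_i$ and $y\in P_j$ with $i\leq j$, then the $L$-interval determined by $x$ and $y$ is contained in $\bigcup_{i\leq n\leq j}P_n$, a finite union of finite sets, hence finite by \FUF. Thus everything reduces to producing such a $\Z$-indexed order consistent partition into finite blocks.

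To build the partition I would proceed stagewise, mirroring the recursion of Lemma \ref{lemma 1} but made two-sided. Enumerating $P$ in the usual ordering of $\N$, I process one element at a time and assign it a block index $b(x)\in\Z$, maintaining the invariant that $b$ is order consistent on the elements placed so far. When a new element $x$ is considered, the already placed elements comparable to $x$ force $b(x)$ to lie above the indices of placed predecessors and below the indices of placed successors; since only finitely many elements have been placed, this leaves a nonempty (possibly unbounded) admissible range of integers into which $x$ may be inserted, creating new outermost blocks when necessary. Here the finiteness of intervals in $P$ plays the role played by the finiteness of down-sets in the $\omega$-like case, and \FUF\ is what lets us pass from finiteness of the relevant intervals to finiteness of the blocks $P_n=b^{-1}(n)$ in the limit.

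The hard part, and the reason this case genuinely differs from Lemma \ref{lemma 1}, will be the treatment of elements incomparable to everything placed so far. Such elements cannot be located by ascending or descending from a fixed reference point (consider an infinite antichain, or a single element lying below infinitely many pairwise incomparable elements), so the coverage argument of Lemma \ref{lemma 1} does not transfer. At the same time they cannot simply be separated into distinct singleton blocks, since assigning each element its own index would amount to embedding $P$ into $\Z$, a statement we will later see is equivalent to \ACA\ and hence strictly stronger than \BS. The placement rule must therefore \emph{clump} incomparable elements into common finite blocks whenever the finite information available at the current stage does not justify separating them, while still respecting order consistency and keeping, via \FUF, every block finite. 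Designing this rule so that the limiting blocks are provably finite under \BS\ alone is the crux of the argument; once it is in place, the reduction of the first paragraph delivers the desired $\zeta$-like linear extension.
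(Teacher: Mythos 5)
Your first paragraph is sound and in fact matches the shape of the paper's own proof: the paper also produces a partition of $P$ into finite blocks arranged in a $\zeta$-like fashion (its order $\preceq_P$ amounts to an order consistent $\Z$-indexed arrangement), and the verification that any linear extension of such a lexicographic sum is $\zeta$-like goes through with \FUF\ exactly as you say. The problem is that everything after that reduction is a description of what a construction would have to achieve, not a construction. You yourself write that designing the placement rule for elements incomparable to everything seen so far ``is the crux of the argument'' -- and that rule is precisely what is never given. Your correct observation that singleton blocks cannot work in general (since that would yield an embedding into $\zeta$, which is \ACA-hard) shows the clumping rule must be genuinely clever, but no candidate rule is proposed, let alone verified to keep the limiting blocks $b^{-1}(n)$ finite using only \BS. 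As it stands, the proposal reduces the lemma to an unsolved problem of the same difficulty.

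For comparison, here is how the paper fills exactly this gap. Instead of placing elements one at a time, it first builds by recursion a sequence of reference points $z_n$, where $z_n$ is the least element of $P$ outside $\bigcup_{i,j<n}[z_i,z_j]_P$; \SI01 induction plus \FUF\ shows the recursion is total, and a coverage argument shows $P=\bigcup_{i,j}[z_i,z_j]_P$. The block $P_n$ consists of the elements newly covered at stage $n$ (each comparable with $z_n$, and finite by \FUF). The element-incomparable-to-everything difficulty disappears because such an element simply becomes the next reference point itself. The clumping rule you were looking for is then applied to whole blocks rather than single elements: call $n$ \emph{left} if $z_n\leq_P z_i$ for some $i<n$ and \emph{right} otherwise, and place $P_n$ below (respectively above) all earlier blocks accordingly. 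One then checks that this order $\preceq_P$ extends $\leq_P$, that it is still $\zeta$-like (an interval between $z_i$ and $z_j$ with $i,j<n$ stays inside $\bigcup_{k<n}P_k$), and that since every $z_n$ is $\preceq_P$-comparable with everything, intervals do not grow when passing to an arbitrary linear extension. If you want to complete your proposal, this block-level left/right placement is the missing rule; your stagewise element-by-element scheme would need an analogous global organizing device to have any hope of a finiteness proof from \BS\ alone.
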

\begin{proof}
In \RCA\ assume \FUF. Let $P$ be a $\zeta$-like partial order, which we
may again assume to be infinite. It is convenient to use the notation
$[x,y]_P = \{z\in P\colon x\leq_P z\leq_P y \lor y \leq_P z\leq_P x\}$,
so that $[x,y]_P \neq \emptyset$ if and only if $x$ and $y$ are
comparable.

We define by recursion a sequence $z_n\in P$ by letting $z_n$ be the
least (w.r.t.\ the ordering of $\N$) $x\in P$ such that
\[
x\notin \bigcup_{i,j<n}\mathopen{[}z_i,z_j\mathclose{]}_P.
\]

As before, since $P$ is infinite and $\zeta$-like, one can prove using
\SI01 induction and \FUF\ that $z_n$ is defined for every $n\in\N$. It
is also easy to prove that
\[
    P=\bigcup_{i,j\in\N}[z_i,z_j\mathclose{]}_P.
\]
Define for each $n\in\N$ the set
\[
P_n=\bigcup_{i<n}[z_i, z_n\mathclose{]}_P\setminus \bigcup_{i,j<n} [z_i,z_j\mathclose{]}_P.
\]
By \FUF, the $P_n$'s are finite. Moreover, they clearly form a
partition of $P$. Note also that $z_n\in P_n$ and every element of
$P_n$ is comparable with $z_n$. Furthermore, every interval $[x,y]_P$
is included in some $[z_i,z_j]_P$. Notice that the same holds for any
partial order extending $\leq_P$.

We now extend $\leq_P$ to a partial order $\preceq_P$ such that any
linear extension of $(P,{\preceq_P})$ is $\zeta$-like. We say that $n$
is left if $z_n\leq_P z_i$ for some $i<n$; otherwise, we say that $n$
is right. Notice that, since $z_n \in P_n$, $n$ is right if and only if
$z_i\leq_P z_n$ for some $i<n$ or $z_n$ is incomparable with every
$z_i$ with $i<n$.

The order $\preceq_P$ places $P_n$ below or above every $P_i$ with
$i<n$ depending on whether $n$ is left or right. Formally, for $x,y\in
P$ such that $x\in P_n$ and $y\in P_m$ let
\[
x\preceq_P y \iff
(n=m\land x\leq_P y)\lor(n<m\land m\ \text{is right})\lor(m<n\land n\ \text{is left}).
\]

We claim that $\preceq_P$ extends $\leq_P$. Let $x\leq_P y$ with $x\in
P_n$ and $y\in P_m$. If $n=m$, $x\preceq_P y$ by definition. Suppose
now that $n<m$, so that we need to prove that $m$ is right. As $x\in
P_n$, $z_i\leq_P x$ for some $i\leq n$. Since $y\in P_m$,  $y$ is
comparable with $z_m$. Suppose that  $z_m<_P y$. Then $y\leq_P z_j$ for
some $j<m$, and so $z_i\leq_Px\leq_P y\leq_P z_j$ with $i,j<m$,
contrary to $y\in P_m$. It follows that $y\leq_P z_m$ and thereby
$z_i\leq_P z_m$ with $i<m$. Therefore, $m$ is right, as desired. The
case $n>m$ (where we need to prove that $n$ is left) is similar.

We claim that $(P,\preceq_P)$ is still $\zeta$-like. To see this, it is
enough to show that for all $i,j<n$
\[
\{x\in P\colon z_i\preceq_P x\preceq_P z_j\} \subseteq \bigcup_{k<n} P_k
\]
and apply \FUF. Let $x\in P_k$ be such that $z_i\prec_P x\prec_P z_j$.
Suppose, for a contradiction, that $k\geq n$ and hence that $i,j<k$. By
the definition of $\preceq_P$, $z_i\prec_P x$ implies that $k$ is
right. At the same time,  $x\prec_P z_j$ implies that $k$ is left, a
contradiction.

Now let $L$ be any linear extension of $(P,{\preceq_P})$ and hence of
$(P,{\leq_P})$. We claim that $L$ is $\zeta$-like. To prove this, we
show that for all $i,j\in\N$
\[
\{ x\in P\colon z_i\leq_L x\leq_L z_j\}=\{x\in P\colon z_i\preceq_P x\preceq_P z_j\}.
\]
One inclusion is obvious because $\leq_L$ extends $\preceq_P$. For the
converse, observe that the $z_n$'s are $\preceq_P$-comparable with any
other element.
\end{proof}

We can now state and  prove our reverse mathematics results.

\begin{theorem}\label{theorem 0}
Over \RCA, the following are pairwise equivalent:
\begin{enumerate}
 \item \BS;
 \item $\omega$ is linearizable;
 \item $\omega^*$ is linearizable;
 \item $\zeta$ is linearizable.
\end{enumerate}
\end{theorem}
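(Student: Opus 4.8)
The plan is to establish all four reversals by reducing them to the single implication \emph{$\omega$ is linearizable $\implies$ \BS}. Lemmas~\ref{lemma 1} and~\ref{lemma 2} already supply \BS\ $\implies (2),(3),(4)$, so only the converses remain, and I would first dispose of two of them by cheap structural reductions. For $(3)$, note that $P\mapsto P^{\mathrm{op}}$ (reversing every comparability) is a $\Delta^0_1$ involution on partial orders for which $P$ is $\omega$-like iff $P^{\mathrm{op}}$ is $\omega^*$-like, and $L$ is an $\omega$-like linear extension of $P$ iff $L^{\mathrm{op}}$ is an $\omega^*$-like linear extension of $P^{\mathrm{op}}$; hence \RCA\ proves $(2)\iff(3)$, and the $\omega^*$ case needs no separate argument. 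For $(4)$ I would argue $(4)\implies(2)$: given an $\omega$-like $P$, adjoin a least element $\bot$ to obtain $P_\bot$, which is still $\omega$-like and therefore $\zeta$-like (as noted after the definition). Applying $(4)$ yields a $\zeta$-like linear extension $L$ of $P_\bot$; since $\bot$ is $L$-least and every $L$-interval is finite, $[\bot,x]_L$ is finite for every $x$, so each $x$ has only finitely many $L$-predecessors. Thus $L$ is $\omega$-like, and deleting $\bot$ gives an $\omega$-like linear extension of $P$. After these reductions, everything rests on proving $(2)\implies\BS$.

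For $(2)\implies\BS$ I would pass through \FUF\ using Lemma~\ref{lemma 0} (Hirst's \RT\ is an equivalent alternative target), and argue contrapositively: from a failure of \FUF\ I would manufacture, inside \RCA, an $\omega$-like partial order that admits no $\omega$-like linear extension, contradicting $(2)$. So suppose we are given $n$ and finite sets $(X_i)_{i<n}$ whose union $U=\bigcup_{i<n}X_i$ is infinite. The guiding reformulation is that an $\omega$-like linear extension of a partial order $P$ is essentially an exhaustion of $P$ by an increasing sequence of finite, $\leq_P$-downward-closed sets (equivalently, an order-preserving injection of $P$ into $(\N,\leq)$): the $L$-predecessor sets are finite and downward closed precisely because $L$ is a linear extension that is $\omega$-like. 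The goal is therefore to build $P$, by $\Delta^0_1$-comprehension from $(X_i)_{i<n}$, so that $P$ is provably $\omega$-like while any such exhaustion would necessarily produce the single bound $k$ with $U\subseteq[0,k)$ that \FUF\ asserts — which is impossible since $U$ is infinite.

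The main obstacle is reconciling these two demands, and it is genuinely the heart of the matter. On one hand, $\omega$-likeness forbids any element from lying above infinitely many others, so I cannot simply stack the $X_i$ into consecutive blocks $X_0<X_1<\dots<X_{n-1}$: transitivity would force the elements of the top block to dominate $U\setminus X_{n-1}$, which is infinite, destroying $\omega$-likeness. On the other hand, if the $X_i$ are merely hung as incomparable finite gadgets, the order becomes a disjoint sum of finite pieces and is trivially linearizable in \RCA, encoding nothing. The encoding must therefore link the finitely many pieces $X_i$ in a \emph{non-transitive, staged} fashion: the sets $X_i$ should appear as the finite (hence $\omega$-like-compatible) lower cones of a connected configuration in which each cone is ``revealed late,'' so that assembling a global exhaustion by finite downward-closed sets becomes exactly as hard as producing the uniform bound of \FUF. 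Designing this configuration so that $\omega$-likeness is verifiable in \RCA\ yet no exhaustion can exist without \BS\ is the step I expect to be delicate, and it parallels the known reversals of combinatorial principles to \BS\ (for instance the treatment of \PART\ by Chong, Lempp and Yang).

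Finally, I would assemble the equivalences: the reductions give $(2)\iff(3)$ and $(4)\implies(2)$, Lemmas~\ref{lemma 1} and~\ref{lemma 2} give $\BS\implies(2),(3),(4)$, and the core construction gives $(2)\implies\BS$; together these close the cycle and show $(1)$–$(4)$ pairwise equivalent over \RCA.
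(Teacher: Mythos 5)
Your forward directions and structural reductions are fine: $(2)\iff(3)$ via the order-reversal involution is exactly the symmetry the paper invokes (it dismisses $(3)\to(1)$ as ``analogous''), and your $(4)\implies(2)$ reduction by adjoining a least element $\bot$ is correct and even a little more economical than the paper, which instead runs a separate reversal for $\zeta$ using the order $\bigoplus_{i<n}(\{\ell_i\}+X_i+\{m_i\})$. But the proof collapses exactly where you yourself flag delicacy: the implication $(2)\implies\FUF$ is never actually proved. You list desiderata for a ``staged, non-transitive'' configuration and point to the \PART\ reversal as a parallel, but no partial order is ever constructed, so the core of the theorem --- the only implication everything else was reduced to --- is missing.

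Worse, the heuristic that steered you away from a construction is mistaken. You dismiss hanging the $X_i$ as incomparable finite gadgets on the grounds that a disjoint sum of finite pieces is ``trivially linearizable in \RCA, encoding nothing.'' It is trivially linearizable (Szpilrajn holds in \RCA), but it is \emph{not} trivially linearizable in an $\omega$-like way: to verify that the obvious concatenation $X_0+X_1+\cdots+X_{n-1}$ is $\omega$-like one must know that the initial unions $\bigcup_{i\le j}X_i$ are finite, and that is precisely \FUF. The paper's reversal uses exactly the construction you rejected, with one twist: put a top element on each gadget, $P=\bigoplus_{i<n}(X_i+\{m_i\})$ with each $X_i$ an antichain. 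Then $P$ is provably $\omega$-like in \RCA\ (the predecessors of $m_i$ are just $X_i\cup\{m_i\}$), and given \emph{any} $\omega$-like linear extension $L$, the finite set $\{m_i\colon i<n\}$ has an $L$-maximum $m_j$; by transitivity every element of $\bigcup_{i<n}X_i$ is an $L$-predecessor of $m_j$, hence $\bigcup_{i<n}X_i$ is finite because $L$ is $\omega$-like. No contrapositive argument and no claim that some order admits no $\omega$-like extension is needed: the hypothesized extension itself hands you the \FUF\ bound. That direct use of the extension, rather than a refutation of its existence, is the missing idea.
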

\begin{proof}
Lemma \ref{lemma 1} gives $(1)\rightarrow(2)$ and $(1)\rightarrow(3)$.
The implication $(1)\rightarrow(4)$ is Lemma \ref{lemma 2}.

To show $(2)\rightarrow(1)$, we assume linearizability of $\omega$ and
prove \FUF. So let $\{X_i\colon i<n\}$ be a finite family of finite
sets. We define $P= \bigoplus_{i<n} (X_i+\{m_i\})$, where the $m_i$'s
are distinct and every $X_i$ is regarded as an antichain. $P$ is
$\omega$-like, and so by $(2)$ there exists an $\omega$-like linear
extension $L$ of $P$. Let $m_j$ be the $L$-maximum of $\{m_i \colon
i<n\}$. Then $\bigcup_{i<n} X_i$ is included in the set of
$L$-predecessors of $m_j$, and is therefore finite because $L$ is
$\omega$-like.

The implication $(3)\to(1)$ is analogous. For $(4)\to(1)$, prove \FUF\
by using the partial order $\bigoplus_{i<n} (\{\ell_i\} + X_i +
\{m_i\})$.
\end{proof}

We now show that the linearizability of $\omega+\omega^*$ requires
\ACA.

\begin{theorem}\label{theorem 1}
Over \RCA, the following are equivalent:
\begin{enumerate}
 \item \ACA;
 \item $\omega+\omega^*$ is linearizable.
\end{enumerate}
\end{theorem}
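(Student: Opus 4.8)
The plan is to prove both implications, with $(1)\to(2)$ short and $(2)\to(1)$ carrying the real content.

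For $(1)\to(2)$ I would argue directly in \ACA. Given an $\omega+\omega^*$-like partial order $P$, the property ``$x$ has finitely many $\leq_P$-predecessors'' is \SI02, so arithmetical comprehension produces the set $A$ of such elements together with its complement $B$. For $x\in A$ the $A$-predecessors are among its finitely many $P$-predecessors, so $(A,{\leq_P})$ is $\omega$-like; dually, each $x\in B$ has infinitely many $P$-predecessors and hence, since $P$ is $\omega+\omega^*$-like, finitely many $P$-successors, so $(B,{\leq_P})$ is $\omega^*$-like. Moreover no element of $B$ lies $\leq_P$-below an element of $A$, since a predecessor of an $A$-element would inherit infinitely many predecessors; hence placing $A$ entirely below $B$ respects $\leq_P$. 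As \ACA\ implies \BS, Lemma \ref{lemma 1} yields an $\omega$-like linear extension $L_A$ of $A$ and an $\omega^*$-like linear extension $L_B$ of $B$, and the lexicographic sum $L_A+L_B$ is then a linear extension of $P$ in which every element of the $L_A$-part has finitely many predecessors and every element of the $L_B$-part has finitely many successors, i.e.\ an $\omega+\omega^*$-like linear extension.

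For $(2)\to(1)$ I would use the standard equivalence over \RCA\ of \ACA\ with the statement that the range of any one-to-one $f\colon\N\to\N$ exists. Given such an $f$, I would build by a stage-by-stage $\Delta^0_1$ construction a partial order $P$ that is a disjoint sum of pairwise incomparable gadgets, one carrying a pair of distinguished elements $a_n,a_n'$ for each $n$. For each $n$, as long as no witness $f(s)=n$ has appeared I keep attaching fresh successors above $a_n$ and fresh predecessors below $a_n'$; as soon as a witness appears I switch and forever attach predecessors below $a_n$ and successors above $a_n'$. Each element of a gadget then has either finitely many predecessors or finitely many successors, so $P$ is $\omega+\omega^*$-like, provably in \RCA.

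The crux is the forcing argument. Applying $(2)$ yields an $\omega+\omega^*$-like linear extension $L$ of $P$. If $n\notin\operatorname{range}(f)$ then $a_n$ has infinitely many $P$-successors and $a_n'$ has infinitely many $P$-predecessors; these relations persist in $L$, so by $\omega+\omega^*$-likeness of $L$ the element $a_n$ has finitely many $L$-predecessors while $a_n'$ has infinitely many, forcing $a_n<_L a_n'$. If $n\in\operatorname{range}(f)$ the roles are reversed and $a_n'<_L a_n$. Hence the set $\{n\colon a_n'<_L a_n\}$, which exists by $\Delta^0_1$ comprehension relative to $L$, equals $\operatorname{range}(f)$, and \ACA\ follows. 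The main obstacle I expect is precisely this step: one must check that the infinite chains built into each gadget are inherited by $L$ and that $\omega+\omega^*$-likeness of $L$ alone pins down $a_n$ and $a_n'$ on opposite sides of the $\omega/\omega^*$ cut, alongside the bookkeeping that keeps the construction $\Delta^0_1$ and every gadget $\omega+\omega^*$-like.
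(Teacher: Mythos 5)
Your proof is correct, and while your forward direction coincides with the paper's, your reversal takes a genuinely different route. For $(1)\to(2)$ both you and the paper split $P$ by arithmetical comprehension into the ($\omega$-like, downward closed) set of elements with finitely many predecessors and its ($\omega^*$-like, upward closed) complement, apply Lemma \ref{lemma 1} via \BS, and take the sum of the two linear extensions. For $(2)\to(1)$, however, the paper does not use switching gadgets: it builds a single $\omega+\omega^*$-like \emph{linear} order $A=\{a_n\}$ from the true and false stages of $f$ (the construction from \cite[Lemma 4.2]{MarSho11}, where false stages have finitely many predecessors and true stages finitely many successors), sets $P=A\oplus B$ with $B$ of type $\omega^*$, and shows that in any $\omega+\omega^*$-like linear extension $L$ the false stages are exactly those $n$ with $(\forall m)(a_n<_L b_m)$; since falsity is also \SI01, the set of false stages is \DE01 and the range of $f$ is then \DE01 in the set of true stages. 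Your construction instead codes membership of $n$ in the range directly: each gadget switches its growth direction when a witness $f(s)=n$ appears, and $\omega+\omega^*$-likeness of $L$ pins the pair $a_n,a_n'$ on opposite sides of the cut, so the range is read off by a single $L$-comparison per $n$ rather than a $\Pi^0_1$ condition intersected with a $\Sigma^0_1$ one. Your argument is more self-contained and the decoding more transparent; the paper's buys a sharper reversal (the witnessing partial order is merely a disjoint sum of two linear orders, so linearizability of $\omega+\omega^*$ already implies \ACA\ on that restricted class) and reuses machinery already available in the reverse-mathematics literature. One point to make explicit when writing your version up: the infinitely many elements attached below $a_n$ after a switch are, by transitivity, also below the finitely many successors attached earlier above $a_n$, which is what makes every gadget element (not just $a_n$ and $a_n'$) have finitely many predecessors or finitely many successors, provably in \RCA\ by a case split on $(\exists s)(f(s)=n)$.
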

\begin{proof}
We begin by proving $(1)\to(2)$. Let $P$ be an $\omega+\omega^*$-like
partial order. In \ACA\ we can define the set $P_0$ of the elements
having finitely many predecessors. So $P_1=P\setminus P_0$ consists of
elements having finitely many successors. Clearly, $P_0$ is
$\omega$-like and $P_1$ is $\omega^*$-like. Since \ACA\ is strong
enough to prove \BS, by Lemma \ref{lemma 1}, $P_0$ has an $\omega$-like
linear extension $L_0$ and $P_1$ has an $\omega^*$-like linear
extension $L_1$. Since $P_0$ is downward closed and $P_1$ is upward
closed, it is not difficult to check that the linear order $L=L_0+L_1$
is $\omega+\omega^*$-like and extends $P$.\smallskip

For the converse, let $f\colon\N\to\N$ be a one-to-one function. We set
out to define an $\omega+\omega^*$-like partial order $P$ such that any
$\omega+\omega^*$-like linear extension of $P$ encodes the range of
$f$. To this end, we use an $\omega+\omega^*$-like linear order
$A=\{a_n\colon n\in\N\}$ given by the false and true stages of $f$.
Recall that $n\in\N$ is said to be true (for $f$) if $(\forall
m>n)(f(m)>f(n))$ and false otherwise, and note that the range of $f$ is
\DE01 definable from any infinite set of true stages.

The idea for $A$ comes from the well-known construction of a computable
linear order such that any infinite descending sequence computes
$\emptyset'$. This construction can be carried out in \RCA\ (see
\cite[Lemma 4.2]{MarSho11}). Here, we define $A$ by letting $a_n\leq
a_m$ if and only if either
\begin{gather*}
f(k)<f(n) \text{ for some }n<k\leq m, \text{ or}\\
 m\leq n \text{ and } f(k)>f(m) \text{ for all }m<k\leq n.
\end{gather*}

It is not hard to see that $A$ is a linear order. Moreover, if $n$ is
false, then $a_n$ has finitely many predecessors and infinitely many
successors. Similarly, if $n$ is true, then $a_n$ has finitely many
successors and infinitely many predecessors. In particular, $A$ is an
$\omega+\omega^*$-like linear order.

Now let $P=A\oplus B$ where $B=\{b_n\colon n\in\N\}$ is a linear order
of order type $\omega^*$, defined by letting $b_n \leq b_m$ if and only
if $n \geq m$. It is clear that $P$ is an $\omega+\omega^*$-like
partial order. By hypothesis, there exists an $\omega+\omega^*$-like
linear extension $L$ of $P$. We claim that $n$ is a false stage if and
only if it satisfies the \PI01 formula $(\forall m)(a_n<_L b_m)$.

In fact, if $n$ is false and $b_m\leq_L a_n$, then $b_m$ has infinitely
many successors in $L$, since $a_n$ has infinitely many successors in
$P$ and a fortiori in $L$. On the other hand, $b_m$ has infinitely many
predecessors in $P$, and hence also in $L$, contradiction. Likewise, if
$n$ is true and $a_n<_L b_m$ for all $m$, then $a_n$ has infinitely
many successors as well as infinitely many predecessors in $L$, which
is a contradiction again.

Therefore, the set of false stages is \DE01, and so is the set of true
stages, which thus exists in \RCA. This completes the proof.
\end{proof}


\section{Embeddable types}

We turn our attention to embeddability. As noted before, \RCA\ suffices
to prove that ``$\tau$ is embeddable'' implies ``$\tau$ is
linearizable''. The converse is true in \ACA.  Actually, embeddability
is  equivalent to \ACA. We thus prove the following.

\begin{theorem}
The following are pairwise equivalent over \RCA:
\begin{enumerate}
 \item \ACA;
 \item $\omega$ is embeddable;
 \item $\omega^*$ is embeddable;
 \item $\zeta$ is embeddable;
 \item $\omega+\omega^*$ is embeddable;
\end{enumerate}
\end{theorem}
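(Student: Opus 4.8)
The plan is to establish the forward implications $(1)\to(k)$ for $k\in\{2,3,4,5\}$ together with the converses $(k)\to(1)$. For the forward implications I would argue in \ACA. Given a $\tau$-like partial order $P$, I would first produce a $\tau$-like linear extension $L$ of $P$: for $\tau\in\{\omega,\omega^*,\zeta\}$ such an extension is available because \ACA\ proves \BS\ and hence, by Lemmas \ref{lemma 1} and \ref{lemma 2}, the linearizability of $\tau$; for $\tau=\omega+\omega^*$ this is exactly the content of Theorem \ref{theorem 1}. It then suffices to embed the $\tau$-like linear order $L$ into $\tau$, which I would do by an explicit counting map whose graph exists by arithmetical comprehension: send $x$ to the number of its $L$-predecessors (for $\omega$) or of its $L$-successors (for $\omega^*$); for $\zeta$ fix a base point and send $x$ to the size of the interval between the base point and $x$, placed in the $\omega^*$- or $\omega$-part of $\zeta=\omega^*+\omega$ according to the side on which $x$ lies; and for $\omega+\omega^*$ split $L$ into the arithmetically definable set of elements with finitely many predecessors and its complement, embed the two halves into $\omega$ and $\omega^*$ respectively, and concatenate. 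Composing $P\hookrightarrow L\hookrightarrow\tau$ then yields the desired embedding.

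For the reverse direction, the case $\tau=\omega+\omega^*$ is immediate: since \RCA\ proves that embeddability implies linearizability, $(5)$ yields the linearizability of $\omega+\omega^*$, which by Theorem \ref{theorem 1} implies \ACA. The remaining reversals $(2)\to(1)$, $(3)\to(1)$ and $(4)\to(1)$ are the substance of the theorem, since here embeddability is strictly stronger than linearizability, the latter delivering only \BS.

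For these I would fix a one-to-one $f\colon\N\to\N$ and build, in \RCA, a $\tau$-like partial order $P_f$ with the property that every embedding $g\colon P_f\to\tau$ renders the range of $f$ (equivalently, an infinite set of true stages for $f$) \DE01\ in $g$. Since the existence of the range of an arbitrary injection is equivalent to \ACA\ over \RCA, this suffices. By reversing the order one reduces the $\omega^*$ case to the $\omega$ case, and the $\zeta$ case can be approached by exploiting the decomposition $\zeta=\omega^*+\omega$. The natural template is the \RCA\ coding technique underlying \cite[Lemma 4.2]{MarSho11}, combined with the true/false stage order already used in the proof of Theorem \ref{theorem 1}.

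The hard part will be that the naive scaffold does not suffice. The $\omega+\omega^*$-like order $A$ of Theorem \ref{theorem 1} is neither $\omega$-like nor $\omega^*$-like, and a decidable ``test below scaffold'' coding only produces a ranking that is \DE01\ in the jump of $g$ rather than in $g$ itself. The delicate point is therefore to arrange the comparabilities of $P_f$ so that non-membership in the range of $f$ is witnessed by a bounded configuration of $g$-values, making the range outright \DE01\ in $g$ and not merely computable from $g'$, while simultaneously keeping every element with only finitely many predecessors (so that $P_f$ is genuinely $\omega$-like) and keeping the order relation decidable (so that $P_f$ exists in \RCA). Reconciling these three requirements at once is where the real work lies.
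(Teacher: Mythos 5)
Your forward direction and the reversal $(5)\to(1)$ coincide with the paper's argument: in \ACA\ pass through linearizability (via \BS\ and Theorem \ref{theorem 1}) and then embed the resulting $\tau$-like linear order into $\tau$ by arithmetically defined counting maps; and $(5)\to(1)$ falls out of Theorem \ref{theorem 1} since embeddability implies linearizability over \RCA. However, for the reversals $(2)\to(1)$, $(3)\to(1)$, $(4)\to(1)$ you have only stated the specification --- build in \RCA\ a $\tau$-like $P_f$ such that any embedding into $\tau$ makes the range of $f$ \DE01\ --- and then explicitly deferred the construction (``where the real work lies''). Since that construction is the entire mathematical content of these reversals, the proof is incomplete at exactly its essential point.

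Moreover, your diagnosis of the difficulty points in a misleading direction: no true/false-stage machinery is needed, and the resolution is a short padding trick (the paper follows Theorem 3.1 of \cite{FriHir90}). Fix an antichain $A=\{a_m\colon m\in\N\}$ and, for each $n$, add $n+1$ fresh elements $b^n_0,\dots,b^n_n$, declaring $b^n_j\leq_P a_m$ if and only if $f(n)\leq m$, with no other comparabilities. This order is decidable, so $P$ exists in \RCA; each $a_m$ has finitely many predecessors because $f$ is one-to-one, and each $b^n_j$ has none, so $P$ is $\omega$-like. Now let $h\colon P\to\omega$ be an embedding. If $f(n)=m$, then the $n+1$ elements $b^n_j$ lie strictly below $a_m$, so their (distinct) images lie below $h(a_m)$ and hence $h(a_m)>n$. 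Therefore $m$ is in the range of $f$ if and only if $(\exists n<h(a_m))(f(n)=m)$: the value $h(a_m)$ itself bounds the search for a preimage, which is precisely the ``bounded configuration of $g$-values'' you were looking for, and the range of $f$ exists by \DE01\ comprehension. In other words, the three requirements you worried about reconciling are met simultaneously by letting the \emph{number of predecessors} of $a_m$ grow with the stage at which $m$ could enter the range; no computation from the jump of the embedding ever arises. The $\omega^*$ and $\zeta$ cases then follow by the symmetric and two-sided variants you correctly anticipated.
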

\begin{proof}
We first show that $(1)$ implies the other statements. Since \BS\ is
provable in \ACA, it follows from Theorem \ref{theorem 0} that \ACA\
proves the linearizability of $\omega$, $\omega^*$ and $\zeta$.  By
Theorem \ref{theorem 1}, \ACA\ proves the linearizability of $\omega
+\omega^*$. We now claim that in \ACA\ ``$\tau$ is linearizable''
implies ``$\tau$ is embeddable'' for each $\tau$ we are considering.
The key fact is that the property of having finitely many predecessors
(successors) in a partial order, as well as having exactly $n\in\N$
predecessors (successors), is arithmetical. Analogously, for a set, and
hence for an interval, being finite or having size exactly $n\in\N$ is
arithmetical too. (All these properties are in fact $\SI02$.)

We consider explicitly the case of $\omega+\omega^*$ (the other cases
are similar). So let $L$ be a $\omega+\omega^*$-like linear extension
of a given $\omega+\omega^*$-like partial order. We want to show that
$L$ is embeddable into $\omega+\omega^*$. Define $f\colon L\to
\omega+\omega^*$ by
\[ f(x)=\begin{cases}
          (0,|\{y\in L\colon y<_L x\}|) & \text{if}\ x\ \text{has finitely many predecessors,}\\
          (1,|\{y\in L\colon x<_L y\}|) &  \text{otherwise}.
        \end{cases}\]
It is easy to see that $f$ preserves the order.\smallskip

For the reversals, notice that $(5)\to(1)$ immediately follows from
Theorem \ref{theorem 1}.

As the others are quite similar, we only prove $(2)\to(1)$ with a
construction similar to that used in the proof of Theorem 3.1 in
\cite{FriHir90}. Let $f\colon\N\to\N$ be a given one-to-one function.
We want to prove that the range of $f$ exists. We fix an antichain
$A=\{a_m\colon m\in\N\}$ and elements $b^n_j$ for $n\in\N$ and $j\leq
n$. The partial order $P$ is obtained by putting for each $n\in\N$ the
$n+1$ elements $b^n_j$ below $a_{f(n)}$. Formally, $b^n_j \leq_P a_m$
when $f(n) \leq m$, and there are no other comparabilities.

$P$ is clearly an $\omega$-like partial order. Apply the hypothesis and
obtain an embedding $h\colon P\to\omega$. Now, we claim that $m$
belongs to the range of $f$ if and only if $(\exists
n<h(a_m))(f(n)=m)$. One implication is trivial. For the other, suppose
that $f(n)=m$. By construction, $a_m$ has at least $n+1$ predecessors
in $P$, and thus it must be $h(a_m)>n$.
\end{proof}

\end{document}